\pgfplotsset{width=10cm,compat=1.9}
\title{Additional Constructions of Sequences of Alternating Sum and Difference Dominated Sets}
\author{Yorick Herrmann}
\email{\textcolor{blue}{\href{mailto:yherrman@uci.edu}{yherrman@uci.edu}}}
\author{Connor Hill}
\email{\textcolor{blue}{\href{mailto:hill.connor.03@gmail.com}{hill.connor.03@gmail.com}}}
\author{Merlin Phillips}
\email{\textcolor{blue}{\href{mailto:merlin.phillips216@gmail.com}{merlin.phillips216@gmail.com}}}
\author{Daniel Flores}
\email{\textcolor{blue}{\href{mailto:flore205@purdue.edu}{flore205@purdue.edu}}}
\author[Miller]{Steven J. Miller}
\email{\textcolor{blue}{\href{mailto:sjm1@williams.edu}{sjm1@williams.edu}}
\textcolor{blue}{\href{Steven.Miller.MC.96@aya.yale.edu}{Steven.Miller.MC.96@aya.yale.edu}}}
\address{Department of Mathematics, Williams College,
Williamstown, MA 01267, USA}
\author{Steven Senger}
\email{\textcolor{blue}{\href{mailto:StevenSenger@missouristate.edu}{StevenSenger@missouristate.edu}}}
\address{Department of Mathematics, Missouri State University,
Springfield, MO 65897, USA}
\date{\today}
\newcounter{master}[section]
\newtheorem{thm}[master]{Theorem}
\newtheorem{lem}[master]{Lemma}
\newtheorem*{rem}{Remark}
\let\c@table\c@master
\newcommand{\sss}{\big|A+A\big|}
\newcommand{\dss}{\big|A-A\big|}
\begin{document}


\begin{abstract}
A More Sums Than Differences (MSTD) set is a finite set of integers $A$
where the cardinality of its sumset, $A+A$, is greater than the
cardinality of its difference set, $A-A$. We address a problem posed by Samuel Allen Alexander that asks whether there exists an infinite sequence of sets alternating
between being MSTD and More Differences Than Sums (MDTS), where each set properly contains the previous. While a companion paper resolved this using `filling in' techniques, we solve the more challenging `non-filling-in' version, where any missing integer between a set's minimum and maximum elements remains missing in all subsequent sets.
\end{abstract}

\maketitle

\section{Introduction}

The sumset $A+A$ and the difference set $A-A$ of a finite set of integers $A$ are defined to be the set of all possible sums or differences between elements of the set $A$. More formally, $A+A \coloneqq \{a+b: a,b \in A\}$ and $A-A \coloneqq \{a-b: a,b \in A\}$, with their respective cardinalities denoted $\sss$ and $\dss$. A \textit{More Sums Than Differences} (MSTD) or \textit{sum-dominated} set is a set $A$ for which $\sss  > \dss$. Similarly, a \textit{More Differences Than Sums} (MDTS) or \textit{difference-dominated} set is a set $A$ for which $\dss  > \sss$. We should generally expect a set $A$ to be difference-dominated since addition is commutative while subtraction is not.  

We see early examples of sum-dominated sets discovered by Conway $\big(\{0,2,3,4,7,11,12,14\}\big)$, Marica $\big(\{1, 2, 3, 5, 8, 9, 13, 15, 16\}\big)$ \cite{MaricaConwayConjecture}, and Freiman-Pigarev
\footnote{$\{0, 1, 2, 4, 5,9, 12, 13, 14, 16, 17, 21, 24, 25, 26, 28, 29\}$}\cite{FreimanPigarev1973}. The set found by Conway was later shown by Hegarty \cite{HegartyMinSum} to be the smallest possible MSTD set with respect to cardinality and diameter. The problem was formalized by Nathanson, who noted that MSTD sets should be rare and proved methods of their construction along with several properties of such sets \cite{nathanson2006}. Despite these sets being rare, Martin and O’Bryant \cite{MartinOBryant} proved that a positive percentage of subsets of $\{1,2,\dots,n\}$ are sum dominated as $n\to\infty$. Note this model (all subsets being equally likely to be chosen) is equivalent to each element of $\{1,2,\dots,n\}$ being in our set with probability $1/2$; Hegarty and Miller \cite{HM} proved that if each element is chosen uniformly and independently with probability $p(n)$, then if $p(n) \to 0$, almost all sets are difference dominated.
Work has also been done to find explicit constructions of generalized MSTD sets \cite{miller2008,ExplicitLargeFamilies,nathanson2017} and generalizations to groups \cite{Zhao2010}. Sumsets and difference sets are also present in many problems in number theory.  

At the recent 2025 CANT (Combinatorial and Additive Number Theory Conference), Samuel Allen Alexander posed the problem of finding an infinite sequence of sets with $A_{i-1} \subset A_i$ that alternate being MSTD and MDTS. In a companion paper \cite{AltSetsPt1}, we addressed this problem at first by  `filling in' sets in order to generate subsequent sets in the sequence. Filling in a set $A_i$ refers to the process of adding elements of $[a,b] \setminus A_i$ to $A_i$. However, filling in can distort the structure of earlier sets in the sequence, resulting in relatively trivial sequences with rapidly growing sets. Constructions with slower growth rates are more desirable, as they extend more naturally to algebraic settings where the construction of long alternating chains is constrained by the underlying structure, such as finite fields or abelian groups. 

In \cite{AltSetsPt1}, we also introduced a method that avoids filling in altogether and produced a sequence which retained the structural properties of the initial set $A_1$. In this paper, we build on this work by providing three additional methods which prohibit the use of filling in. Before introducing these methods, we first establish several known properties of both general and MSTD sets.  

For any set of integers $A$, we define the dilation $x \cdot A \coloneqq \{xa:a \in A\}$. For all integers $x,y$ such that $x \ne  0$, the set
\begin{align}B \ = \ x \cdot A+\{y\} \ = \ \{xa+y:a\in A\}\end{align} 
satisfies $\big|B+B\big| = \sss$ and $\big|B-B\big| = \dss$ \cite{nathanson2006}. This property ensures that any method developed on a restricted subset of the integers can be extended, via dilation and translation, to the full domain of integers.

Furthermore, through dilation and translation, we have that any non-trivial $k$-term arithmetic progression can be obtained from the interval 
\[I \ = \ \{1,2,\ldots,k\} \ := \ [1,k].\] 
It is also evident that \begin{align}
\notag I+I \ &= \ [2,2k]\ \\ 
I-I \ &= \ [1-k,k-1],
\end{align} so such sets have the same number of sums and differences. For instance, if $A=\{0,1,2\}$, then \begin{align}
   \notag A+A \ &= \ \{0,1,2,3,4\} \\
   \notag A-A \ &= \ \{-2,-1,0,1,2\} \\
   \sss \ &= \ \dss \ = \ 5.
\end{align}

For a set of integers $A$, if there exists an $a^* \in \mathbb{Z}$ such that $a^*-A = A$, then we say that $A$ is symmetric with respect to $a^*$, and we have that $\sss = \dss$, and if $A$ is an arithmetic progression, we have that $A$ is symmetric to $a^*=\min(A)+\max(A)$ \cite{nathanson2006}.

Many methods that are used to construct MSTD sets make use of the symmetry property \cite{nathanson2006}. For example, the set $A = \{0,2,3,7,11,12,14\}$ is symmetric with respect to $14$, so $\sss = \dss$. However, adding $4$ to this set forms Conway's set, which is MSTD. We also use this symmetry property in one method presented in the paper.  

Adding to the work of our previous paper \cite{AltSetsPt1}, we provide additional non-trivial non-filling in methods to obtain the desired sequences. Each method achieves a more efficient growth rate than the filling in methods presented in \cite{AltSetsPt1}. Theorem \ref{NFIMethod1Theorem} provides a method that generates the sequence by adjoining elements chosen to preserve the sum-difference imbalance at each step.
Theorem \ref{NFIMethod3Theorem} refines an approach of Nathanson by imposing additional constraints to achieve an even slower growth rate in the resulting sequence. Ideas from the proof of Theorem \ref{NFIMethod3Theorem} are used to construct a single sequence which is our slowest-growing result.

\section{Non-Filling in Methods}
We formalize the `non-filling in' constraint on the alternating sequence we seek to generate. For any $A_k$, let $a = \min A_k$ and $b = \max A_k$. Then, if an $x \in [a,b]$ is not in $A_k$, we require that $x \notin A_n$ for all $n > k$. In other words, this restriction prohibits filling in when generating the sequence. We provide several non-filling in methods below.

\subsection{Non-Filling in Method 1}

First, we construct a non-filling in method that essentially `copies' the initial set $A_1$ around a suitably chosen modulus 
$n$, ensuring that the initial structure of $A_1$ is maintained in later sets in the sequence. Theorem \ref{NFIMethod1Theorem} summarizes this method.

\begin{thm} \label{NFIMethod1Theorem}
Let $A_1$ be a MSTD set with $0\in A_1$, such that there is an integer $n > \max A_1$ satisfying the following conditions.
\begin{enumerate}
    \item The number of unique elements in $A_1+A_1$ modulo $n$ is the same as the number of unique elements of $A_1-A_1$ modulo $n$. 
    \item Let $x$ be the number of $a\in A_1$ such that $n+a\notin A_1+A_1$, and let $y$ be the number of $b\in A_1$ such that $n-b\notin A_1-A_1$. We require that $2y-x-1 > \big|A_1+A_1\big|-\big|A_1-A_1\big|$.
\end{enumerate}
For $l \geq 1$, we define \begin{align}
    \notag A_{2l} \ &\coloneqq \ A_{2l-1} \ \cup \ \{ln\} \\
    \notag A_{2l+1} \ &\coloneqq \ A_{2l-1} \ \cup \ (A_1+ln).
\end{align}

Then $A_{2l}$ is MDTS, $A_{2l+1}$ is MSTD, and $A_1 \subset \cdots \subset A_{2l-1} \subset A_{2l} \subset A_{2l+1} \subset \cdots$ forms the desired alternating sequence.

\end{thm}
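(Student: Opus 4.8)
The plan is to exploit the fact that the odd-indexed sets are unions of disjoint shifted copies of $A_1$. Since $0\in A_1$ and $n>\max A_1$, for $l\ge 1$ we have
\[
A_{2l+1} \ = \ A_1+\{0,n,2n,\ldots,ln\} \ = \ \bigsqcup_{j=0}^{l}\big(A_1+jn\big),
\]
a disjoint union, and $A_{2l}=A_{2l-1}\cup\{ln\}$ with $ln>\max A_{2l-1}$. Write $S=A_1+A_1$ and $D=A_1-A_1$; since $\max A_1<n$ we have $S\subseteq[0,2n)$ and $D\subseteq(-n,n)$. First I would dispatch the structural claims: each newly adjoined element strictly exceeds the previous maximum, so the inclusions $A_{2l-1}\subset A_{2l}\subset A_{2l+1}$ are strict and no interior gap is ever filled, which gives the non-filling-in property for free.

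For $B_l:=A_{2l+1}$ I would use distributivity of $\pm$ over the translation set to get
\[
B_l+B_l \ = \ \bigcup_{m=0}^{2l}\big(S+mn\big), \qquad B_l-B_l \ = \ \bigcup_{d=-l}^{l}\big(D+dn\big).
\]
These are unions of translates of a fixed set by consecutive multiples of $n$, which I would count by \emph{folding modulo} $n$: grouping elements by residue mod $n$, each residue class of $S$ sits at $n$-heights in $\{0,1\}$ and each residue class of $D$ at heights in $\{-1,0\}$ (because $S\subset[0,2n)$ and $D\subset(-n,n)$), so each class contributes a full interval of heights. This yields the clean counts
\begin{align*}
|B_l+B_l| \ &= \ 2l\,\sigma_S+|S|, \\
|B_l-B_l| \ &= \ 2l\,\sigma_D+|D|,
\end{align*}
where $\sigma_S$ and $\sigma_D$ are the numbers of distinct residues of $S$ and $D$ modulo $n$. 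Hypothesis (1) gives $\sigma_S=\sigma_D$, so the terms linear in $l$ cancel and $|B_l+B_l|-|B_l-B_l|=|S|-|D|>0$; hence $A_{2l+1}$ is MSTD for every $l$.

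For $A_{2l}=B_{l-1}\cup\{ln\}$ I would compute both cardinalities by counting only the elements newly created by adjoining $ln$. The new sums lie in $(ln+B_{l-1})\cup\{2ln\}$ and the new differences in $\pm(ln-B_{l-1})$. Writing a candidate sum as $a+mn$ with $a\in A_1$ and $l\le m\le 2l-1$, and a candidate difference as $mn-a$ with $1\le m\le l$, I would decide membership in $B_{l-1}\pm B_{l-1}$ by the same folding argument. The key observation is that $0\in A_1$ forces $a\in S$ and $-a\in D$, so every interior layer is already present; only the extreme layer ($m=2l-1$ for sums, $m=l$ for differences) can be new, and there a candidate is new precisely when $a+n\notin S$, respectively when $n-a\notin D$. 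By the definitions of $x$ and $y$ this produces exactly $x+1$ new sums (including $2ln$) and $2y$ new differences (a reflected pair for each of the $y$ values), so
\[
|A_{2l}-A_{2l}|-|A_{2l}+A_{2l}| \ = \ \big(|D|+2y\big)-\big(|S|+x+1\big) \ = \ (2y-x-1)-\big(|S|-|D|\big),
\]
which is positive exactly by hypothesis (2); hence $A_{2l}$ is MDTS.

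The main obstacle is the overlap bookkeeping in the union-of-translates counts: one must pin down precisely which translate $S+mn$ (or $D+dn$) a given candidate can lie in, and which boundary layers fall outside the admissible range of $m$ (resp.\ $d$). This is exactly where the hypotheses enter — condition (1) cancels the bulk terms that grow linearly in $l$, so that the fixed sum--difference bias of $A_1$ survives, while condition (2), together with $0\in A_1$ collapsing all interior layers, guarantees that adjoining the single point $ln$ tips the balance toward differences. As a sanity check I would verify the base case $l=1$ directly, where the interior layers are empty but the formulas $|A_2+A_2|=|S|+x+1$ and $|A_2-A_2|=|D|+2y$ still hold.
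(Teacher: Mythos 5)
Your proposal is correct and follows essentially the same route as the paper: the MDTS steps are handled by exactly the same count of $x+1$ new sums versus $2y$ new differences at the extreme layer, and the MSTD steps rest on the same residue-class count modulo $n$ that condition (1) balances. The only difference is cosmetic — you package the odd-index count as a closed-form formula $|B_l\pm B_l|=2l\sigma+|S|$ (resp.\ $|D|$) via folding the union of translates, where the paper obtains the same increments of $2\sigma_S$ and $2\sigma_D$ inductively step by step.
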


\begin{proof}   

First, set $A_2 = A_1\cup\{n\}$. We show that $A_2$ is difference-dominated.

The additional sums created by $n$ are $n+n$ and $\{n+a:a\in A_1\}$. In the latter case, a new sum is realized whenever $n+a\notin A_1+A_1$. Thus
\begin{equation}
    \big|A_2+A_2\big| \ = \ \big|A_1+A_1\big|+x+1.
\end{equation}
\indent The additional differences created by $n$ are $\pm\{n-a,a\in A_1\}$; since a difference is realized if and only if its negative is realized, we only count differences of the form $(n-a)$ and double the result. This gives a new difference whenever $n-a\notin A_1-A_1$. Thus
\begin{equation}
    \big|A_2-A_2\big|\ = \ \big|A_1+A_1\big|+2y,
\end{equation} 
and we have
\begin{align}
    \notag\big|A_2-A_2\big|-\big|A_2+A_2\big| \ &= \ \big|A_1-A_1\big|-\big|A_1+A_1\big|+2y-x-1 \\
    &= \ 2y-x-1-\big(\big|A_1+A_1\big|-\big|A_1-A_1\big|\big).
\end{align}
Using the second assumption, we get $\big|A_2-A_2\big|-\big|A_2+A_2\big| > 0$.

Let $A_3 = A_1+\{0,n\}$ \cite{MartinOBryant}. We show that $A_3$ is sum-dominated. For $A_3+A_3$, there are two types of additional sums to consider.
\begin{enumerate}
    \item[] \textbf{Case 1:} $\{a+n+b+n:a,b\in A_1\}$. The largest element of $A_1+A_1$ is $2\cdot\max A_1 < 2n$, so all sums of this type are unaccounted for. Furthermore, every new sum corresponds to a unique value of $a+b$, so there are $\sss$ new sums.
    \item[] \textbf{Case 2:} $\{a+n+b:a,b\in A_1\}$. These give new sums unless there are $a',b'\in A_1$ with $a'+b'-n = a+b$ or $a'+b'+n = a+b$ (as they occur in $A_1+A_1$ or in Case 1, respectively). As such, the new sums are the elements of $A_1+A_1$ that occur exactly once modulo $n$.
\end{enumerate}
Note that since $2n  > \max(A_1+A_1)$, for each element that appears in $A_1+A_1$ modulo $n$, there are either one or two sums in $A_1+A_1$ that are congruent to that element modulo $n$. If two, the element is double-counted by $\big|A_1+A_1\big|$ in Case 1. If one, the element is counted once by Case 1 and once again by Case 2. Thus the amount of new sums is twice the number of unique elements of $A_1+A_1$ modulo $n$.

Next, for $A_3-A_3$, there are two types of additional differences to consider.
\begin{enumerate}
    \item[]\textbf{Case 1:} $\{a+n-b-n:a,b\in A_1\}$. This is just $A_1-A_1$, so there are no new differences.
    \item[]\textbf{Case 2:} $\pm\{a+n-b:a,b\in A_1\}$. Again, we only count the new positive differences and then double the total. A positive difference is new unless there are $a',b'\in A_1$ with $a'-b'-n = a-b$, so the number of new positive differences is the number of unique differences modulo $n$.
    \begin{rem}
        Unlike Case 2 of the sum counting, we don't count only the differences which occur exactly once modulo $n$. If $a'-b'-n = a-b$, we reject $a-b$ but we already have $a'-b'$.
   \end{rem}
\end{enumerate}
Thus, the amount of new differences is twice the number of unique elements of $A_1-A_1$ modulo $n$.

Using the first assumption, we note that the number of new sums in $A_3+A_3$ is the same as the number of new differences in $A_3-A_3$. Thus
\begin{equation}
    \big|A_3+A_3\big|-\big|A_3-A_3\big| \ = \ \big|A_1+A_1\big|-\big|A_1-A_1\big|.
\end{equation}

Since $A_1$ is sum-dominated, we have that $A_3$ is sum-dominated as well. Additionally, we have
\begin{equation}
    A_1\subset A_2\subset A_3.
\end{equation}

For $l\ge2$, we define $A_{2l+1}$ as in the statement of Theorem \ref{NFIMethod1Theorem}, i.e.,
\begin{equation}
    A_{2l+1} \ = \ A_{2l-1}\cup(A_1+ln).
\end{equation}
The additional sums in $A_{2l+1}+A_{2l+1}$ that are not in $A_{2l-1}+A_{2l-1}$ are $\{a+ln+b+kn\}$, where $0 \leq  k \leq l, k\in \mathbb{Z}$. For $k \ne l,l-1$, the sum
\begin{equation}
    a+ln+b+kn \ = \ a+(l-1)n+b+(k+1)n
\end{equation}
is not a new sum since $A_1+\{(l-1)n\} \subset A_{2l-1}$ and $A_1+\{(k+1)n\} \subset A_{2l-1}$. We then consider the following types of sums.
\begin{enumerate}
    \item[]\textbf{Case 1:} $\{a+b+2ln\}$. The largest element of $A_{2l-1}+A_{2l-1}$ is $2\cdot\max A_1+2(l-1)n < 2ln$, so these sums are all unaccounted for. Therefore, there are $\big|A_1+A_1\big|$ new sums.
    \item[]\textbf{Case 2:} $\{a+b+(2l-1)n\}$. These give new sums unless there are $a',b'\in A$ with $a'+b'-n = a+b$ or $a'+b'+n = a+b$ (as they occur in $A_{2l-1}+A_{2l-1}$ or in Case 1, respectively). The number of new sums is therefore equal to the number of elements of $A_1+A_1$ that occur exactly once modulo $n$.
\end{enumerate}
Thus, as with $A_3+A_3$, the amount of new sums is twice the number of unique elements of $A_1+A_1$ modulo $n$.

The additional differences in $A_{2l+1}-A_{2l+1}$ are $\pm\{a+ln-b-kn\}$, where $0 \leq k \leq l$, $k \in \mathbb{Z}$. For $k \ne 0$,
\begin{equation}
    a+ln-b-kn \ = \ a+(l-k)n-b
\end{equation}
is not a new difference since $(A_1-A_1) + \{(l-k)n\} \subset A_{2l-1} -A_{2l-1}$. The $\{a+ln-b\}$ give new differences unless there are $a',b'\in A_1$ with $a'-b'-n = a-b$. Thus, as with $A_3-A_3$, the number of new differences is twice the number of unique elements of $A_1-A_1$ modulo $n$. Therefore
\begin{align}
    \notag\big|A_{2l+1}+A_{2l+1}\big|-\big|A_{2l+1}-A_{2l+1}\big| \ &= \ \big|A_{2l-1}+A_{2l-1}\big|-\big|A_{2l-1}-A_{2l-1}\big|\\
    \ &= \ \big|A_1+A_1\big|-\big|A_1-A_1\big|,  \label{Sec3Method1EqualDiffBWsSumAndDiff}
\end{align}
and $A_{2l+1}$ is sum-dominated. \\

 We now define $A_{2l}$ as in the statement of Theorem \ref{NFIMethod1Theorem}, i.e.,
 \begin{equation}
    A_{2l} \ = \ A_{2l-1}\cup\{ln\}.
 \end{equation}
 The additional sums in $A_{2l}+A_{2l}$ are $2ln$ and the set of $\{ln+a+kn\}$, where $a \in A_1$ and $0 \leq k \leq l-1$, $k \in \mathbb{Z}$. For $k \ne l-1$,
 \begin{equation}
    a+ln+kn \ = \ a+(l-1)n+(k+1)n
 \end{equation}
 is not a new sum since $0  \leq k+1 \leq l-1$. When $k = l-1$, we have that $a+(2l-1)n$ is a new sum whenever $a+(2l-1)n \notin A_1+A_1+\{(2l-2)n\}$, since $A_1+A_1+\{(2l-2)n\}$ is the only possible subset of $A_{2l-1} +A_{2l-1}$ that $a+(2l-1)n$ can be a part of. The total number of $a$ that satisfy this is $x$ since $a+(2l-1)n \notin A_1+A_1+\{(2l-2)n\} \iff n + a \notin A_1+A_1$. Thus
 \begin{equation}
     \big|A_{2l}+A_{2l}\big| \ = \ \big|A_{2l-1}+A_{2l-1}\big|+x+1. \label{+x+1}
 \end{equation}
The additional differences in $A_{2l}-A_{2l}$ are $\pm\{ln-b-kn\}$, where $b \in A_1$ and $0 \leq k \leq l-1$, $k \in \mathbb{Z}$. For $k \ne 0$,
\begin{equation}
    ln-b-kn \ = \ (l-k)n-b
\end{equation}
is not a new difference since $0 \leq l-k \leq l-1$. When $k = 0$, $ln-b$ is a new (positive) difference whenever $ln-b \notin \{(l-1)n\}+A_1-A_1$, since $\{(l-1)n\}+A_1-A_1$ is the only possible subset of $A_{2l-1} - A_{2l-1}$ that $ln-b$ can be a part of. The total number of $b$ that satisfy this is $y$ since $ln-b \notin \{(l-1)n\}+A_1-A_1 \iff n - b \notin A_1-A_1$. Thus 
\begin{equation}
    \big|A_{2l}-A_{2l}\big| \ = \ \big|A_{2l-1}-A_{2l-1}\big|+2y,
\end{equation} and combining this with Equations \eqref{Sec3Method1EqualDiffBWsSumAndDiff} and \eqref{+x+1}, we get 
\begin{align}
    \notag\big|A_{2l}-A_{2l}\big|-\big|A_{2l}+A_{2l}\big| &= \big|A_{2l-1}-A_{2l-1}\big|-\big|A_{2l-1}+A_{2l-1}\big|+2y-x-1\\
    \notag &= \big|A_1-A_1\big|-\big|A_1+A_1\big|+2y-x-1\\
     &= 2y-x-1-(\big|A_1+A_1\big|-\big|A_1-A_1\big|).
\end{align}
Hence, from our second assumption, $A_{2l}$ is difference-dominated.
Finally, we note that
\begin{equation}
    A_{2l-1}\subset A_{2l}\subset A_{2l+1}.
\end{equation}
\indent Thus, the sequence $A_1\subset A_2 \subset \cdots$ alternates between being MSTD and MDTS. 

\end{proof}

We briefly verify that the `no filling in' constraint is met. For $l \ge 1$,
\begin{align}
    \notag \max A_{2l-1} \ &< \ ln\\
    A_{2l}\setminus A_{2l-1} \ &= \ \{ln\},
\end{align}
and
\begin{align}
    \notag \max A_{2l}& \ = \ ln\\
    A_{2l+1}\setminus A_{2l}\subset[&ln,(l+1)n-1].
\end{align}

As such, this sequence satisfies the constraint. 

We give an example of a sequence generated by this method. Let $A_1 = \{0,2,3,4,7,11,12,14\}$, the Conway set. For $n = 17$, the sets $A_1+A_1$ and $A_1-A_1$ both have 17 unique elements modulo $n$, so the first condition of Theorem \ref{NFIMethod1Theorem} is met. Furthermore, it can be computationally verified that for $n = 17$, we have $x = 3$, $y = 4$, $\big|A_1+A_1\big| = 26$, and $\big|A_1-A_1\big| = 25$. Therefore, $2y-x-1 > \big|A_1+A_1\big| -\big|A_1-A_1\big|$, and the second condition of Theorem \ref{NFIMethod1Theorem} is met. 

Applying the method, we get \begin{align}
    \notag A_2 \ &= \ A_1 \ \cup \{17\} \\
    \notag A_3 \ &= \ A_1 \ \cup \ \{a+17  \mid a \in A_1\} \\
    \notag A_4 \ &= \ A_3 \ \cup \{34\} \\
    A_5 \ &= \ A_4 \ \cup \ \{a+34  \mid a \in A_1\},
\end{align}
and so on. The cardinalities and diameters of the sets in this sequence are given in Table \ref{NFIM1Table}. The density of a set refers to that set's cardinality divided by its diameter. The D($A_i$)/D($A_{i-1}$) column measures how much larger a set's diameter is compared to the previous set in the sequence. Note that all entries in the density and growth rate columns are rounded to 3 decimal places, and this convention will be followed throughout the rest of the paper.

\begin{table}[h] 
\centering
\caption{Non-Filling in Method 1 Example Sequence}
\label{NFIM1Table}
\begin{tabular}{|c|c|c|c|c|c|c|c|}
\hline
Set & $\big|A_i+A_i\big|$ & $\big|A_i-A_i\big|$ & Cardinality & Diameter & $\big|A_i\big|/\big|A_{i-1}\big|$ & D($A_i$)/D($A_{i-1}$)/& Density \\
\hline
$A_1$ & 26 & 25 & 8 & 14 & N/A & N/A & 0.571\\
$A_2$ & 30 & 33 & 9 & 17 & 1.125 & 1.214 & 0.529 \\
$A_3$ & 60 & 59 & 16 & 31 & 1.778 & 1.824 & 0.516\\
$A_4$ & 64 & 67 & 17 & 34 & 1.063 & 1.097 & 0.500 \\
$A_5$ & 94 & 93 & 24 & 48 & 1.412 & 1.412 & 0.500 \\
$A_6$ & 98 & 101 & 25 & 51 & 1.042 & 1.063 & 0.490\\
$A_7$ & 128 & 127 & 32 & 65 & 1.280 & 1.275 & 0.492 \\
\vdots & \vdots & \vdots & \vdots & \vdots & \vdots & \vdots &\vdots\\
\hline
\end{tabular}\\
\raggedleft Limiting MSTD density: $0.471\ (8/17)$ \hspace{0.8cm}
\end{table}

As seen in the table, the diameter of each MSTD set in the chain is exactly $n = 17$ larger than the most recent MSTD set in the sequence. Furthermore, the cardinality of each MSTD set in the chain is exactly $\big|A_1\big| = 8$ larger than the most recent MSTD set in the sequence. As such, we conclude that the growth rate of the cardinality and diameter between consecutive MSTD sets in a sequence generated by this method is linear.

\begin{rem}
    We have not proven that there exists an $n$ for every MSTD set that satisfies the initial two conditions. However, empirical observations suggest that nearly every MSTD set does have such a valid $n$, and many have more than one. For the Conway set, we could use $n = 17,18$ or $20$ to generate the sequence.
    
    As to why this is, we note that both $x$ and $y$ increase at roughly the same rate as $n$ increases for most MSTD sets, so the second condition is likely to be met for larger $n$. On the other hand, we reason that the first condition is more likely to be met for smaller $n$. This is because values greater than $n$ in $A_1+A_1$ and values less than $0$ in $A_1-A_1$ are more likely to fill in the gaps. In other words, for smaller $n$, there is a higher probability that the number of unique elements modulo $n$ in both the sumset and difference set equals $n$ itself. We also note that meeting the first condition is impossible for $n > 2\cdot\max A_1$ because the number of unique sums and differences modulo $n$ are $\big|A_1+A_1\big|$ and $\big|A_1-A_1\big|$, respectively.
\end{rem}

\subsection{Non-Filling in Method 2}

We now construct a second method that generates the desired sequence and has $\big|A_{i+1}\big| = \big|A_i\big|+1$ for all $A_i$ in the sequence. Note that this is the smallest possible cardinality growth rate for any sequence.

We use the following results taken from Nathanson \cite{nathanson2006}.

\begin{thm} \label{NathansonTheorem1}
Let \( m, d, \) and \( k \) be integers such that \( m \geq 4 \), \( 1 \leq d \leq m - 1 \), \( d \ne m/2 \), and
\begin{itemize}
    \item \( k \geq 3 \) if \( d < m/2 \)
    \item \( k \geq 4 \) if \( d > m/2 \).
\end{itemize}
Define
\begin{align*}
B \ &= \ [0, m - 1] \setminus \{d\} \\
L \ &= \ \{m - d, 2m - d, \ldots, km - d\} \\
a^* \ &= \ (k + 1)m - 2d \\
A^* \ &= \ B \cup L \cup (a^* - B) \\
A \ &= \ A^* \cup \{m\}.
\end{align*}

Then \( A \) is a MSTD set of integers, with $2m \in (A+A) \setminus (A^*+A^*)$ and $A^*-\{m\} \subset A^*-A^*$.
\end{thm}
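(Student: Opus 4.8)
The plan is to build everything on the near-symmetry of $A^*$. The outer blocks $B$ and $a^* - B$ are reflections of one another about $a^*/2$, and the progression $L$ is centered so as to be fixed by this reflection; adjoining the single point $m$ then breaks the symmetry in favor of sums. Concretely, I would first prove that $A^*$ is symmetric with respect to $a^*$, i.e.\ $a^* - A^* = A^*$. Writing $L = \{im - d : 1 \le i \le k\}$, the reindexing $i \mapsto k+1-i$ gives $a^* - L = L$, while $a^* - (a^* - B) = B$ takes care of the outer blocks, so that $a^* - A^* = (a^* - B) \cup L \cup B = A^*$. By the symmetry property recorded in the introduction, this yields $\big|A^* + A^*\big| = \big|A^* - A^*\big|$.

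Granting this, the MSTD conclusion follows from the two displayed claims, which together say that passing from $A^*$ to $A = A^* \cup \{m\}$ creates at least one new sum and no new difference. Adjoining $m$ can only introduce sums $m + a$ and $2m$, and differences $\pm(a-m)$, for $a \in A^*$. For the sum claim, $2m = m + m$ obviously lies in $A + A$, and I would show $2m \notin A^* + A^*$ by exhausting the pairings: sums from $B + B$ are at most $2m - 2$; the one borderline case $B + L$ would require the representation $2m = d + (2m - d)$, which is blocked precisely because $d \notin B$; the large blocks $L + L$ and those involving two summands near $a^*$ are ruled out using $1 \le d \le m-1$ and $d \ne m/2$; and the mixed block $B + (a^* - B)$ is pushed past $2m$ exactly by the lower bound on $k$.

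For the difference claim $A^* - \{m\} \subset A^* - A^*$, since $A^* - A^*$ is symmetric it suffices to realize each $m - a$ (equivalently $a - m$), $a \in A^*$, as a difference of two elements of $A^*$. I would split according to whether $a$ lies in $B$, in $L$, or in $a^* - B$, producing an explicit representation in each case; the natural device is to pair an element of $L$ against an element of the nearly complete interval $B$, supplemented by the multiples of $m$ up to $(k-1)m$ that $L - L$ supplies. Once both claims hold, $\big|A + A\big| \ge \big|A^* + A^*\big| + 1 = \big|A^* - A^*\big| + 1 = \big|A - A\big| + 1$, so $A$ is MSTD.

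I expect the difference claim to be the main obstacle, since it is the only part requiring a genuine case analysis; the delicate points are the extreme differences $\pm m$ and $\pm(m-1)$ and, above all, the large differences $a - m$ with $a \in a^* - B$, whose realization is what forces $L$ to be long. The separate lower bounds $k \ge 3$ (for $d < m/2$) and $k \ge 4$ (for $d > m/2$) enter both here and in keeping the block $B + (a^* - B)$ clear of $2m$ in the sum claim: a quick check (e.g.\ $m=4$, $d=3$, $k=3$, where $2m = 0 + 8$ appears in $A^* + A^*$) shows that $k = 3$ genuinely fails when $d > m/2$, which is exactly why the hypothesis sharpens to $k \ge 4$ there. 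The symmetry computation is routine by contrast.
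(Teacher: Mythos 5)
The paper does not prove this statement: Theorem \ref{NathansonTheorem1} is imported verbatim from Nathanson \cite{nathanson2006}, and the paper only later reuses two facts from his proof, namely $\{m\}-A^*\subseteq A^*-A^*$ and $2m\notin A^*+A^*$. Your outline is exactly that standard argument --- symmetry of $A^*$ about $a^*$ giving $\big|A^*+A^*\big|=\big|A^*-A^*\big|$, then $2m$ as a guaranteed new sum and the containment $A^*-\{m\}\subset A^*-A^*$ (with negation symmetry of difference sets) guaranteeing no new differences --- so in approach it matches the proof the paper relies on. Your identification of the borderline representation $2m=d+(2m-d)$ in $B+L$, and your check that $m=4$, $d=3$, $k=3$ puts $2m=0+8$ into $B+(a^*-B)$, correctly pin down why the hypotheses are what they are. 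The only part that remains a plan rather than a proof is the case analysis for $A^*-\{m\}\subset A^*-A^*$ (e.g.\ realizing $-d$ when $a=m-d\in L$, the edge cases $d\in\{1,m-2,m-1\}$ where $B-B$ is not the full interval $[-(m-1),m-1]$, and the differences $a^*-b-m$), which is indeed where the real work in Nathanson's proof lives; as written this is a correct strategy with the verifications deferred, not a complete argument.
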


\begin{lem} \label{lemma1}
    Let $m \geq 4$ and $2\leq r  \leq m - 3$, with $m,r \in \mathbb{N}$. Let $B = [0,m-1] \setminus \{r\}$. Then $B+B  =  [0,2m-2]$, and $B-B  =  [-(m-1),m-1]$.
\end{lem}

Consider a MSTD set $A$ that is constructed in the way described in Theorem \ref{NathansonTheorem1}, with the additional constraints that \begin{align}
    \notag &m  \ \equiv \ 0 \bmod{4} \\ 
    &d \ \in \ \left\{\frac{m}{4},\frac{3m}{4}\right\}.
\end{align}

Now, let \begin{align}
\notag A_1 \ &= \ A \cup \{-d,(k+1)m-d\} \\
\notag A_1^* \ &= \ A^*  \cup \{-d,(k+1)m-d\} \\
L_1 \ &= \ L \cup \{-d,(k+1)m-d\}.
\end{align} 

We show that $A_1$ is MSTD. Since $-d$ and $(k+1)m -d$ are symmetric with respect to $a^*$, we have \begin{align}
    \big|A_1^* + A_1^*\big| \ = \ \big|A_1^* - A_1^*\big|.
\end{align}

We know $2m \notin A^* + A^*$, so showing $2m \notin A_1^* + A_1^*$ amounts to showing there exists no $a_1,a_2 \in A_1^*$ such that $a_1 -d = 2m$ or $a_2+ (k+1)m-d = 2m$. If such an $a_1$ existed, then $a_1 =  2m + d$. To prove that $2m+d \notin A_1^*$, we show individually that $2m+d$ is not in $B$, $L_1$, and $a^*-B$.
 \begin{itemize}
    \item Since $\max(B) < 2m+d$, we have that $2m+d \notin B$. 
    \item Suppose $2m+d \in L_1$. Then, since $d<m$, it follows that $2m+d = 3m-d$, which implies $d = m/2$. This is a contradiction since we assumed $d \ne m/2$.
    \item Assume $2m+d \in a^*-B$. Then, $2m+d =  (k+1)m-2d -b$ for some $b \in B$. Rearranging, we get $b = (k-1)m-3d$. To show $b \notin B$, we consider the possible values of $d$.
    
\begin{enumerate}
   \item If $d = m/4$, then \begin{align}
       b \ = \ \left(k-\frac{7}{4}\right)m.
   \end{align}
Since $k \geq 3$, this implies $b > m$, which is a contradiction.
\item If $d = 3m/4$, then \begin{align}
    b \ = \ \left(k-\frac{13}{4}\right)m.
\end{align}
If $k > 4$, then $b > m$. If $k = 4$, then $b = d$, which is a contradiction.
\end{enumerate}
In either case, $b \notin B$, which is a contradiction. Therefore, $2m+d \notin a^*-B$.
\end{itemize}
Thus, no possible $a_1 \in A_1^*$ exists.

If we solve for $a_2$ we get $a_2 = (1-k)m+d$. However, $a_2 \notin A_1^*$ because $a_2 < \min(A_1^*) = -d$. Thus, $2m \notin A_1^*$. 

Next, we prove that $(A_1+A_1) \setminus (A_1^*+A_1^*) = \{2m\}$ by showing $m+A_1^* \subseteq A_1^*+A_1^*$. 

\begin{itemize}
\item First, we show $m+B \subset A_1^*+A_1^*$. We have \begin{align}
    m+B \ = \ [m,2m-1] \setminus \{d+m\}.
\end{align}

Since $2m-d \in L_1$, and $d-1 \in B$, it follows that \begin{align}
  (2m-d)+(d-1) \ = \ 2m-1 \in A_1^*+A_1^*.
\end{align}
\begin{enumerate}
\item If $1 < d < m-2$, then by Lemma \ref{lemma1} \begin{align}
    B+B \ = \ [0,2m-2],
\end{align}
so $m+B \subset A_1^*+A_1^*$. 
\item If $d = 1$, then \begin{align}
    B+B \ = \ [0,2m-2] \setminus \{2\}
\end{align}
so $m+B \subset A_1^*+A_1^*$. 
\item If $d = m-2$, then \begin{align}
    B+B \ = \ [0,2m-2] \setminus \{2m-3\}.
\end{align}
Consider $2m-d = m+2 \in L$ and $m-5 \in B$. We know $m-5 \in B$ since $d = m-2$ implies $m \geq 5$ because $d \ne m/2$. Then \begin{align}
    (m+2)+(m-5) \ = \ 2m-3 \in A_1^*+A_1^*.
\end{align}
So $m+B \subset A_1^*+A_1^*$. 
\item If $d = m-1$, then \begin{align}
    B+B = [0,2m-4].
\end{align}
Consider $2m-d = m+1 \in L_1$, and $m-3$, $m-4 \in B$. Hence \begin{align}
    \notag (m+1) + (m-4) \ &= \ 2m-3 \ \in \ A_1^*+A_1^* \\
    (m+1) + (m-3) \ &= \ 2m-2 \ \in \ A_1^*+A_1^*.
\end{align}
So, $m+B \subset A_1^*+A_1^*$.
\end{enumerate}
Combining all cases, we conclude \begin{align}m+B \ \subset \  A_1^*+A_1^*. \label{m+B subset A_1*+A_1*}
\end{align}

\item Next, we show $m+L_1 \subset A_1^*+A_1^*$. Take $l \in L_1$ such that $l \ne  \max(L_1)$. Then, $l+m \in L_1$, and since $0 \in B$, we have $l+m \in A_1^*+A_1^*$. If $l = \max(L_1) = (k+1)m-d$, then \begin{align}
    l + m \ = \ (k+2)m-d.
\end{align} 
\begin{enumerate}
\item If $d = m/4$, then \begin{align}
    \notag l+m \ &= \ \left(k+\frac{7}{4}\right)m \\
    \notag 2m - d \ &= \ \frac{7m}{4} \\
    a^*-\frac{m}{2} \ &= \ km. \label{a*-m/2v1}
\end{align}
Clearly, $7m/4 \in L_1$, and since $d \ne m/2$, we have $km \in a^*-B$. Thus \begin{align}
    l+m \ &\in \ A_1^*+A_1^*.
\end{align}
\item If $d = 3m/4$, then \begin{align}
    \notag l+m \ &= \ \left(k+\frac{5}{4}\right)m \\
    \notag 3m - d \ &= \ \frac{9m}{4} \\
    a^*-\frac{m}{2} \ &= \ (k-1)m.\label{a^*-m/2v2}
\end{align}
Clearly, $9m/4 \in L_1$, and $(k-1)m \in a^*-B$. Therefore \begin{align}
    l+m \ &\in \ A_1^*+A_1^*.
\end{align}
\end{enumerate}
Thus \begin{align}
    m+L_1 \ \subset \ A_1^*+A_1^*. \label{m+L_1 subset A_1*+A_1*}
\end{align}

\item Lastly, we show $m+a^*-B \subset A_1^*+A_1^*$. First \begin{align}
    x \ \in \ m+ a^*-B & \ \implies \ x \ = \ a^*+(m-b), \ b \in B.
\end{align}
Clearly, $a^* \in a^*-B$. We show $m-b \in A_1^*$. For all $b \in B \setminus \{0\}$, we have \begin{align}
    m-B \setminus \{0\} \ = \ [1,m-1] \setminus \{m-d\}.
\end{align}

If $m-b  \ne d$, then $m-b \in B$ and $x \in A_1^*+A_1^*$. We consider two cases for when $m-b = d$. 
\begin{enumerate}
\item $d = m/4$. Then $m-b = d \implies b = 3m/4$ and \begin{align}
    x \ &= \ \left(k+\frac{3}{4}\right)m.
\end{align}
We observe that \begin{align}
    (k+1)m-d \ = \ \left(k+\frac{3}{4}\right)m,
\end{align}
so $x\in L_1$, and $x \in A_1^*+A_1^*$.

\item $d  = 3m/4$. Then $m-b = d \implies b = m/4$, and \begin{align}
    x \ &= \ \left(k+\frac{1}{4}\right)m.
\end{align}
We observe that \begin{align}
    (k+1)m-d \ = \ \left(k+\frac{1}{4}\right)m,
\end{align}
so $x\in L_1$, and $x \in A_1^*+A_1^*$. 
\end{enumerate}
If $b = 0$, then \begin{align}
    x \ = \ (k+2)m - 2d.
\end{align}
Taking $(k+1)m-d, \ m-d \in L_1$, we get \begin{align}
    (k+1)m-d+(m-d) \ = \ (k+2)m -2d.
\end{align}

Thus $x \in A_1^*+A_1^*$, and \begin{align}
    m+a^*-B \ \subset \ A_1^*+A_1^*. \label{m+a*-B subset A_1*+A_1*}
\end{align} 
\end{itemize}

From \eqref{m+B subset A_1*+A_1*}, \eqref{m+L_1 subset A_1*+A_1*}, and \eqref{m+a*-B subset A_1*+A_1*}, we conclude that \begin{align}
    m+A_1^* \subset  A_1^*+A_1^*.
\end{align}

Therefore, $(A_1+A_1) \setminus (A_1^*+A_1^*) = \{2m\}$, and \begin{align}
    \big|A_1+A_1 \big| \ = \ \big|A_1^*+A_1^* \big| + 1. \label{A_1sums+1are_equal}
\end{align}

Next, we show $\big|A_1-A_1\big| = \big|A_1^*-A_1^*\big|$ by showing $\{m\}-A_1^* \subseteq A_1^*-A_1^*$. In the proof of Theorem \ref{NathansonTheorem1}, Nathanson showed that $\{m\}-A^* \subseteq A^*-A^*$ \cite{nathanson2006}. It remains to be shown that $\{m\}-\{-d,(k+1)m+d\} \subseteq A_1^*-A_1^*$.

\begin{itemize}
   
\item Consider $m-(-d) = m+d$. 
\begin{enumerate}
\item If $d = m/4$, then $m-2d \in B$. Taking $2m-d \in L_1$, we have \begin{align}
    (2m-d) - (m-2d) \ = \ m+d
\end{align}
so $m+d \in A_1^*-A_1^*$. 

\item If $d = 3m/4$, then $m-d < m/2$, so $2m-2d < m$. This gives $2m-2d \in B$ since $2m-2d \ne d$. Taking $3m-d \in L_1$, we have
\begin{align}
    (3m-d) - (2m-2d) \ = \ m+d
\end{align}
so $m+d \in A_1^*-A_1^*$.
\end{enumerate}

\item $\left((k+1)m-d\right) - m = km-d \in A_1^* -A_1^*$, since $0 \in A_1^*$.
\end{itemize}
Thus, $m-A_1^* \subseteq A_1^*-A_1^*$, and since $A_1^* \subset A_1$, \begin{align}\big|A_1-A_1\big| \ = \ \big|A_1^*-A_1^*\big|.\label{A_1diffsareequal}
\end{align} 

\noindent From \eqref{A_1sums+1are_equal} and \eqref{A_1diffsareequal}, we have that $A_1$ is a MSTD set satisfying \begin{align}
    \big|A_1+A_1\big| \ = \ \big|A_1-A_1\big|+1.
\end{align}

For $r \geq 1$, assume that $A_{2r-1}$ is a MSTD set constructed as \begin{align}
   \notag L_{2r-1} \ &= \ \{-(r-1)m-d, \dots, (k+r-1)m -d, \ (k+r)m-d\} \\
    \notag A_{2r-1}^* \ &= \ B \ \cup \ L_{2r-1} \ \cup \ a^*-B \\
    A_{2r-1} \ &= \ A_{2r-1}^* \ \cup \ \{m\}. 
\end{align}
Assume further that $A_{2r-1}$ satisfies \begin{align}
    \notag m-A_{2r-1}^* \ &\subseteq \ A_{2r-1}^* -A_{2r-1}^* \\
    \notag 2m \ &\notin \ A_{2r-1}^* + A_{2r-1}^* \\
    \big|A_{2r-1}+A_{2r-1}\big| \ &= \ \big|A_{2r-1} - A_{2r-1}\big| + 1. \label{ZZzz}
\end{align}

We have already proven that $A_1$ satisfies \eqref{ZZzz}. Let \begin{align}
A_{2r+1} \ = \ A_{2r-1} \ \cup \ \{-rm-d, (k+r+1)m-d\}.
\end{align}

We show $A_{2r+1}$ is MSTD and satisfies \eqref{ZZzz}. First, we define \begin{align}
   \notag L_{2r+1} \ &= \ L_{2r-1} \ \cup \ \{-rm-d, (k+r+1)m-d\} \\
    \notag A_{2r+1}^* \ &= \ B \ \cup \  L_{2r+1} \ \cup \ a^*-B \\
    A_{2r+1} \ &= \ A_{2r+1}^* \cup \{m\}.  
\end{align}

Since $A_{2r+1}^*$ is symmetric with respect to $a^*$, we have that \begin{align}
    \big|A_{2r+1}^*+A_{2r+1}^*\big| \ = \ \big|A_{2r+1}^* - A_{2r+1}^*\big| \label{symmetric2r+1}.
\end{align}

We now show that $2m \notin A_{2r+1}^*+A_{2r+1}^*$. From our inductive assumption, $2m \notin A_{2r-1}^*+A_{2r-1}^*$, so we need to show that there exists no $a_1,a_2 \in A_{2r+1}^*$ such that $a_1 -rm-d = 2m$ or $a_2 +(k+r+1)m-d = 2m$. 

\begin{itemize}
\item Rearranging, we get $a_1 = (r+2)m+d$. We show $a_1 \notin A_{2r+1}^*$. Clearly, $a_1 \notin B$ since $m-1 < a_1$. Additionally, $d < m$ and $a_1 \in L_{2r+1}$ implies \begin{align}
 \notag (r+2)m+d \ &= \ (r+3)m -d, \\
 2d \ &= \ m,
\end{align}
which is impossible. Thus, $a_1 \notin L_{2r+1}$. 

If $a_1 \in a^*-B$, then $\exists b\in B$ such that \begin{align}
    \notag(r+2)m+d \ &= \ (k+1)m -2d - b \\
    b \ &= \ (k-r-1)m-3d. 
\end{align}
\begin{enumerate}
\item If $d = m/4$, then \begin{align}
    b \ = \ \left(k-r-\frac{7}{4}\right)m.
\end{align}
If $b \in B$, then \begin{align}
    \notag 0 \ &\leq \ \left(k-r-\frac{7}{4}\right)m \ < \ m \\
    0 \ &\leq \ k-r-\frac{7}{4} \ < \ 1 \label{eqn-k-r-7/4}.
\end{align}

Since $k,r \in\mathbb{Z}$, the only instance where \eqref{eqn-k-r-7/4} occurs is when $k-r = 2$. In this case, we have \begin{align}
    k-r-\frac{7}{4} \ &= \ \frac{1}{4} \ = \ d.
\end{align}

Since $d \notin B$, there exists no $b \in B$ such that $a_1 = a^*-b$.

\item If $d = 3m/4$, then \begin{align}
    b \ = \ \left(k-r-\frac{13}{4}\right)m.
\end{align}
If $b \in B$, then \begin{align}
   \notag0 \ &\leq \ \left(k-r-\frac{13}{4}\right)m \ < \ m \\
   0 \ &\leq \ k-r-\frac{13}{4} \ < \ 1 \label{Eqn-13/4}.
\end{align}

Since $k,r \in\mathbb{Z}$, the only instance where \eqref{Eqn-13/4} occurs is when $k-r = 4$. In this case, we have \begin{align}
    k-r-\frac{13}{4} \ &= \frac{3}{4} \ = \ d.
\end{align}

Since $d \notin B$, there exists no $b \in B$ such that $a_1 = a^*-b$.
\end{enumerate}
Combining all results together, we have that $a_1 \notin A_1^*$.

\item For $a_2$, rearranging gives $a_2 = (1-k-r)m +d$. However, $a_2 \notin A_1^*$ because $a_2 < -rm-d = \min(A_1^*)$. 
\end{itemize}
Thus, no possible $a_1,a_2$ exist, and we have that \begin{align}
  2m \notin A_{2r+1}^*+A_{2r+1}^*. 
\end{align} 

We now show that $(A_{2r+1}+A_{2r+1}) \setminus (A_{2r+1}^*+A_{2r+1}^*) = \{2m\}$. In the construction of $A_1$ we proved \eqref{m+B subset A_1*+A_1*} and \eqref{m+a*-B subset A_1*+A_1*}. Since $A_1^* \subset A_{2r+1}^*$, we have \begin{align}
    \notag m+B \ &\subset \ A_{2r+1}^*+A_{2r+1}^* \\
    m+a^*-B \ &\subset \ A_{2r+1}^*+A_{2r+1}^*. \label{m+a*-Bsusbetgen}
\end{align}

We prove that $m+ L_{2r+1} \subset A_{2r+1}^*+A_{2r+1}^*$. 
\begin{itemize}
\item For $l \in L_{2r+1}$ such that $l \ne \max(L_{2r+1})$, we have $l+m \in L_{2r+1}$. Thus, $l+m \in A_{2r+1}^*+A_{2r+1}^*$.

\item If $l = \max(L_{2r+1}) = (k+r+1)m-d$, then \begin{align}
    l+m \ = \ (k+r+2)m-d.
\end{align}
\begin{enumerate}
\item If $d = m/4$, then \begin{align}
    l+m \ = \ \left(k+r+\frac{7}{4}\right)m.
\end{align}
Since $k \geq 3$, it follows that $(r+2)m-d \in L_{2r+1}$. We have \begin{align}
    (r+2)m-d \ = \ \left(r+\frac{7}{4}\right)m. \label{DIFF_EQN1-7/4}
\end{align}

In the construction of $A_1$, we showed that $a^*-m/2 \in a^*-B$, and that \begin{align} 
    a^* - \frac{m}{2} \ &= \ km. \label{DIFF_EQN2-7/4}
\end{align}

Combining the two sums, we get  \begin{align}
    \left(r+\frac{7}{4}\right)m + km \ = \ l +m
\end{align}
so $l+m \in A_{2r+1}^*+A_{2r+1}^*$.

\item If $d = 3m/4$, then \begin{align}
    l+m \ = \ \left(k+r+\frac{5}{4}\right)m.
\end{align}
Since $k \geq 4$, it follows that $(r+3)m-d \in L_{2r+1}$. We have\begin{align}
    (r+3)m-d \ = \ \left(r+\frac{9}{4}\right)m.\label{DIFF_EQN1-5/4}
\end{align}

In the construction of $A_1$, we showed that $a^*-m/2 \in a^*-B$, and that \begin{align}
    a^* - \frac{m}{2} \ = \ (k-1)m. \label{DIFF_EQN2-5/4}
\end{align}

Combining the two sums, we get  \begin{align}
    \left(r+\frac{9}{4}\right)m + (k-1)m \ = \ l+m
\end{align}
so $l+m \in A_{2r+1}^*+A_{2r+1}^*$.
\end{enumerate}
\end{itemize}
Thus, \begin{align}
    m+ L_{2r+1} \subset A_{2r+1}^*+A_{2r+1}^*. \label{yolo}
\end{align}
Taking \eqref{m+a*-Bsusbetgen} and \eqref{yolo}, we get \begin{align}
    m+A_{2r+1}^* \ \subset \ A_{2r+1}^*+A_{2r+1}^*,
\end{align}
which yields \begin{align}
    |A_{2r+1}+A_{2r+1}| \ = \ |A_{2r+1}^*+A_{2r+1}^*|+1. \label{r_sum_cardinality_plus1}
\end{align}

Now we show that $m-A_{2r+1}^* \subset A_{2r+1}^*-A_{2r+1}^*$. In the proof of Theorem \ref{NathansonTheorem1}, Nathanson \cite{nathanson2006} showed \begin{align}
    \notag m-B \ &\subset \ A_{2r+1}^*-A_{2r+1}^* \\
    a^*-B-m \ &\subset \ A_{2r+1}^* - A_{2r+1}^* \label{NathansonShowed1}.
\end{align} 

We show that $L_{2r+1}-m \subset A_{2r+1}^*-A_{2r+1}^*$. 
\begin{itemize}
\item Take $l \in L_{2r+1}$ such that $l \ne \min(L_{2r+1})$. Then $l-m \in L_{2r+1}$, so $l-m \in A_{2r+1}^*-A_{2r+1}^*$.

\item Let $l = \min(L_{2r+1}) =  -rm-d$. Showing $l-m \in A_{2r+1}^*-A_{2r+1}^*$ is the same as showing $m-l \in A_{2r+1}^*-A_{2r+1}^*$. As such, \begin{align}
    m-l \ = \ (r+1)m+d.
\end{align}

\begin{enumerate}
\item If $d = m/4$, then $m-2d \in B$. Consider $(r+2)m-d \in L_{2r+1}$. Then \begin{align}
    \notag (r+2)m-d - (m-2d) \ = \ m-l& \\
    m-l \ \in \ A_{2r+1}^*-A_{2r+1}^*.&
\end{align}

\item If $d = 3m/4$, then $2m-2d \in B$. Consider $(r+3)m-d \in L_{2r+1}$. Then \begin{align}
    \notag (r+3)m-d - (2m-2d) \ = \ m-l& \\
    m-l \ \in \ A_{2r+1}^*-A_{2r+1}^*.&
\end{align}
\end{enumerate}
\end{itemize}
Thus \begin{align}
m - L_{2r+1} \ \subset \ A_{2r+1}^*-A_{2r+1}^*. \label{Mykonos}
\end{align}
Taking \eqref{NathansonShowed1} and \eqref{Mykonos} gives \begin{align}
    m-A_{2r+1}^* \ \subset \ A_{2r+1}^*-A_{2r+1}^*,
\end{align}
which yields 
\begin{align}
    \big|A_{2r+1}-A_{2r+1}\big| \ = \ \big|A_{2r+1}^* - A_{2r+1}^*\big|. \label{2r+1diffequals}
\end{align}

Combining \eqref{symmetric2r+1}, \eqref{r_sum_cardinality_plus1}, and \eqref{2r+1diffequals} shows that $A_{2r+1}$ is a MSTD set which satisfies \eqref{ZZzz}.

Now, define $A_{2r} =  A_{2r-1} \cup \{(k+r+1)m-d\}$. We show that \begin{align}
  \big|(A_{2r}+A_{2r}) \setminus (A_{2r-1}+A_{2r-1})\big| \ \leq \ m. 
\end{align} 
\begin{itemize}
\item First, denote $p = (k+r+1)m-d$. Since $\max(A_{2r-1}) < p$, we have that $2p$ is a new sum.

\item Now, consider $p+L_{2r-1}$. Take $l \in L_{2r-1}$. 
\begin{enumerate}
\item If $l < \max(L_{2r-1})$, then $l+m \in L_{2r-1}$ and \begin{align}
    l + p \ &= \ (l+m)+ (p-m) \label{L_2r-1CALCfora*-b}. 
\end{align}
Clearly, $p-m \in L_{2r-1}$, so $l+p \in A_{2r-1}+A_{2r-1}$.

\item If $l = \max(L_{2r-1}) \ = \ (k+r)m-d$, then $l+p$ could be a new sum. 
\end{enumerate} 

\item Consider $m+p = (k+r+2)m-d$. 
\begin{enumerate}
\item If $d = m/4$, then \begin{align}
    m+p \ = \ \left(k+r+\frac{7}{4}\right)m.
\end{align}
Since $(r+2)m-d \in L_{2r-1}$, we apply the same sum used in \eqref{DIFF_EQN1-7/4} and \eqref{DIFF_EQN2-7/4} to show $m+p \in A_{2r-1}+A_{2r-1}$.

\item If $d = 3m/4$, then \begin{align}
    m+p \ = \ \left(k+r+\frac{5}{4}\right)m.
\end{align}
Since $(r+3)m-d \in L_{2r-1}$ we apply the same sum used in \eqref{DIFF_EQN1-5/4} and \eqref{DIFF_EQN2-5/4} to show  $m+p \in A_{2r-1}+A_{2r-1}$. 
\end{enumerate}
Thus \begin{align}
 m+p \ \notin \ A_{2r-1}+A_{2r-1}. \label{m+pNOTIN} 
\end{align}
\item Consider $p+B$. We observe that \begin{align}
    p+B \ = \ [p, p+m-1] \setminus \{p+d\}.
\end{align}
\begin{enumerate}
\item Let $d = m/4$. Then \begin{align}
    p +b \ = \ \left(k+r+\frac{3}{4}\right)m + b.
\end{align}
Let $x\in \mathbb{Z}$ such that $-r+1 \leq x \leq k+r$. Then $xm-d =(x-1/4)m{}\in L_{2r-1}$. We now show that there exists some $b_1 \in B$ such that \begin{align}
    \notag\left(x-\frac{1}{4}\right)m+a^*-b_1 \ &= \ p+b \\   
    \notag \left(x-\frac{1}{4}\right)m+\left(k+\frac{1}{2}\right)m - b_1 \ &= \ \left(k+r+\frac{3}{4}\right)m+b \\
    \left(x-r-\frac{1}{2}\right)m-b \ &= \ b_1 \label{x-r-1/2v1}.
\end{align}
We know $0 \leq b, \ b_1 <m$. 
\begin{itemize}
\item[$\circ$] Let $x = r+1$. Then \begin{align}
    \notag b_1 \ &= \ \frac{m}{2}-b \\
    \notag 0 \ &\leq \ \frac{m}{2}-b \ < \ m \\
    b \ &\leq \ \frac{m}{2} \ < m+b.
\end{align}
Furthermore, we know $m/2-b \ne d$ since $b \ne d$. Thus, for all $b \leq m/2$, choosing $x = r+1$ and $b_1 = m/2-b$ satisfies \eqref{x-r-1/2v1}. 
\item[$\circ$] Let $x = r+2$. Then \begin{align}
   \notag  b_1 \ &= \ \frac{3m}{2}-b \\
    \notag0 \ &\leq \ \frac{3m}{2}-b \ < \ m \\
    -m+b \ &\leq \ \frac{m}{2} \ < \ b.
\end{align}
Furthermore, we know $3m/2-b \ne d$ since $b < m$. Thus, for all $b > m/2$, choosing $x = r+2$ and $b_1 = 3m/2-b$ satisfies \eqref{x-r-1/2v1}.
\end{itemize}
Therefore, since there is a valid $x$ and $b_1$ for every $b \in B$, we have $p+B\subset A_{2r-1}+A_{2r-1}$. 

\item Now, let $d = 3m/4$. Then \begin{align}
    p + b \ = \ \left(k+r+\frac{1}{4}\right)m + b.
\end{align}
Let $x\in \mathbb{Z}$ such that $-r+1 \leq x \leq k+r$. Then $xm-d = (x-3/4)m{}\in L_{2r-1}$. We now show that there exists some $b_1 \in B$ such that \begin{align}
    \notag\left(x-\frac{3}{4}\right)m+a^*-b_1 \ &= \ \left(k+r+\frac{1}{4}\right)m+b \\   
    \notag\left(x-\frac{1}{4}\right)m+\left(k-\frac{1}{2}\right)m - b_1 \ &= \ \left(k+r+\frac{3}{4}\right)m+b \\
    \left(x-r-\frac{3}{2}\right)m-b \ &= \ b_1 \label{x-r-1/2v2}.
    \end{align} 
We know $0 \leq b, \ b_1 < m$. 
\begin{itemize}
\item[$\circ$] Let $x = r+2$. Then \begin{align}
    \notag b_1 \ &= \ \frac{m}{2}-b \\
    \notag 0 \ &\leq \ \frac{m}{2}-b \ < \ m \\
    b \ &\leq \ \frac{m}{2} \ < \ m+b.
\end{align}
Furthermore, we know $m/2-b \ne d$ since $b \geq 0$. Thus, for all $b \leq m/2$, choosing $x = r+2$ and $b_1 = m/2 -b$ satisfies \eqref{x-r-1/2v2}. 
\item[$\circ$] Now, let $x = r+3$. Then \begin{align}
    \notag b_1 \ &= \ \frac{3m}{2}-b \\
    \notag 0 \ &\leq \ \frac{3m}{2}-b \ < \ m \\
    -m +b  &\leq \ \frac{m}{2} \ < \ b.
\end{align}
Furthermore, we know $3m/2-b \ne d$ since $b \ne d$. Thus, for all $b > m/2$, choosing $x = r+3$ and $b_1 = 3m/2 -b$ satisfies \eqref{x-r-1/2v2}.
\end{itemize}
\end{enumerate}

Therefore, since there is a valid $x$ and $b_1$ for every $d$ and $b \in B$, we have \begin{align}
p+B\ \subset \ A_{2r-1}+A_{2r-1}. \label{p+B}  
\end{align} 

Before considering $p+a^*-B$, we note that we currently have up to 2 potential new sums ($2p$ and $p+(k+r)m-d$).

\item We first observe that $p+a^*-B$ has exactly $m-1$ elements. We show that $\exists l \in L_{2r-1}$ such that $l \ne \max(L_{2r-1})$, $l \in a^*-B$. Recall that we showed $l+p \in A_{2r-1}+A_{2r-1}$ for this $l$ in \eqref{L_2r-1CALCfora*-b}.

Since elements of $L_{2r-1}$ are separated by exactly $m$ and since \begin{align}
    \notag a^*-B \ &= \ [km-2d+1,(k+1)m-2d] \setminus \{(k+1)m-3d\} \\
    \min(L_{2r-1}) \ &< \ \min(a^*-B) \ < \ \max(a^*-B) \ < \ \max(L_{2r-1}),
\end{align}
there must exist some $l \in L_{2r-1}$ such that
\begin{align}
    \notag l \ &\ne \ (k+r)m-d \\
    l \ &\in \ [km-2d+1,(k+1)m-2d].
\end{align}

We take this $l$ and let $l = xm-d$ for some $x \in \mathbb{Z}$, $-r+1 < x < k+r$. If $l = (k+1)m-3d$, then \begin{align}
    \notag xm -d \ &= \ (k+1)m-3d \\
    2d \ &= \ (k-x+1)m. \label{2d=k-x+1m}
\end{align}

However, regardless of whether $d = m/4$ or $3m/4$, we have that $2d\ \not\equiv \ 0 \bmod{m}$. Thus, \eqref{2d=k-x+1m} is impossible, and so $l \ne (k+1)m-3d$, implying \begin{align}
    l \ \in \ a^*-B.
\end{align}

Since $l+p \in A_{2r-1}+A_{2r-1}$, then although there are $m-1$ elements in $p+a^*-b$, there are only up to $m-2$ total possible new sums. 
\end{itemize}
Thus, the maximum number of new sums in $A_{2r}+A_{2r}$ that are not in $A_{2r-1}+A_{2r-1}$ is $(m-2)+2 = m$. Alternatively, \begin{align}
   |A_{2r}+A_{2r}| \ \leq \ |A_{2r-1}+A_{2r-1}| + m. \label{A_2rtotalsumsleqm} 
\end{align}

Now, consider $A_{2r}-A_{2r}$. First, we note that \begin{align}
    A_{2r-1}-A_{2r-1} \ \subset \ [-\big((k+r)m-d\big),(k+r)m-d].
\end{align}

Considering $p-B$, we have \begin{align}
    p-B \ = \ \left[(k+r)m-d+1,(k+r+1)m-d\right]\ \setminus \{(k+r+1)m-2d\}.
\end{align}

There are exactly $m-1$ elements in $p-B$, and since none of them lie in the range of $A_{2r-1}-A_{2r-1}$, we have that $p-B$ gives a set of $m-1$ differences in $A_{2r}-A_{2r}$ that are not in $A_{2r-1}-A_{2r-1}$. Similarly, $B-p$ contributes another $m-1$ new differences. Adding these up, we have that $A_{2r}$ has at least $2m-2$ new differences, i.e., \begin{align}
    \big|A_{2r}-A_{2r}\big| \ \geq \ \big|A_{2r-1}+A_{2r-1}\big|+2m-2 \label{A2rdiffs2m-2}.
\end{align}

Now, if we take the results from \eqref{ZZzz}, \eqref{A_2rtotalsumsleqm}, and \eqref{A2rdiffs2m-2}, we have \begin{align}
    \notag \big|A_{2r}+A_{2r}\big| \ \leq \ \big|A_{2r-1}+A_{2r-1}\big| + m \ &= \ |A_{2r-1}-A_{2r-1}|+m+1 \\
    \big|A_{2r-1}-A_{2r-1}\big|+m+1 \ &\leq \ \big|A_{2r}-A_{2r}\big|-m+3.
\end{align}
Thus\begin{align}
    \big|A_{2r}+A_{2r}\big| \ \leq \ \big|A_{2r}-A_{2r}\big|-m+3,
\end{align}
but since $m \geq 4$, we have that \begin{align}
    \big|A_{2r}+A_{2r}\big| \ < \ \big|A_{2r}-A_{2r}\big|.
\end{align}

Thus, $A_{2r}$ is a difference-dominated set. Furthermore, from our constructions of $A_{2r}$ and $A_{2r+1}$, we observe that \begin{align}
    A_{2r-1} \subset A_{2r} \subset A_{2r+1}.
\end{align} 
Since the construction of $A_{2r}$ and $A_{2r+1}$ works for all $r \geq 1$, and since $A_1$ satisfies the necessary conditions for $A_{2r-1}$, we are thus able to generate an infinite chain of sets \begin{align}
    A_1 \subset A_2 \subset A_3 \subset \cdots
\end{align}
such that $A_{2r-1}$ is MSTD and $A_{2r}$ is MDTS for all $r\in\mathbb{N}$. 

We summarize the results of this method in Theorem \ref{NFIMethod3Theorem}.

\begin{thm} \label{NFIMethod3Theorem}
   Let $A$ be a MSTD set that is constructed in the way described in Theorem \ref{NathansonTheorem1}, with the additional constraints that \begin{align*}
    &m  \ \equiv \ 0 \bmod{4} \\ 
    &d \ \in \ \left\{\frac{m}{4},\frac{3m}{4}\right\}.
\end{align*} 
Then $A_1 = A \cup \{-d, (k+1)m-d\}$ is MSTD. For $r \geq 1$, define \begin{align*}
    A_{2r} \ &\coloneqq \ A_{2r-1} \ \cup \ \{(k+r+1)m-d\} \\
    A_{2r+1} \ &\coloneqq \ A_{2r-1} \ \cup \ \{-rm-d, (k+r+1)m-d\}.
\end{align*}
Then $A_{2r}$ is MDTS, $A_{2r+1}$ is MSTD, and $A_1 \subset \cdots \subset A_{2r-1} \subset A_{2r} \subset A_{2r+1} \subset \cdots$ forms the desired alternating sequence. Additionally, if $A_i^* = A_i \setminus \{m\}$, we have \begin{enumerate}
    \item $m-A_{2r-1}^* \subset A_{2r-1}^*-A_{2r-1}^*$
    \item $m+A_{2r-1}^* \subset A_{2r-1}^*+A_{2r-1}^*$
    \item $\big|A_{2r+1}+A_{2r+1}\big| = \big|A_{2r-1}+A_{2r-1}\big|+ 1$ 
    \item $\big|A_{2r}+A_{2r}\big| \leq \big|A_{2r}-A_{2r}\big|-m+3$.
\end{enumerate}

\end{thm}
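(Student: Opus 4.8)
The plan is to prove the theorem by induction on $r$, carrying along the package of invariants recorded in items (1)--(3): that $A_{2r-1}$ is MSTD, that its starred version $A_{2r-1}^*$ is symmetric about $a^*$ (so that $|A_{2r-1}^*+A_{2r-1}^*| = |A_{2r-1}^*-A_{2r-1}^*|$), that $2m \notin A_{2r-1}^*+A_{2r-1}^*$, and that $m+A_{2r-1}^* \subseteq A_{2r-1}^*+A_{2r-1}^*$ together with $m-A_{2r-1}^* \subseteq A_{2r-1}^*-A_{2r-1}^*$. The entire point of these invariants is that adjoining the lone ``bump'' $m$ to the symmetric core $A_{2r-1}^*$ creates exactly one new sum (namely $2m$) and no new differences, which instantly upgrades the equality $|A_{2r-1}^*+A_{2r-1}^*|=|A_{2r-1}^*-A_{2r-1}^*|$ to $|A_{2r-1}+A_{2r-1}| = |A_{2r-1}-A_{2r-1}|+1$, yielding the MSTD conclusion and item (3).

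For the base case I would verify these invariants directly for $A_1 = A \cup \{-d,(k+1)m-d\}$. Symmetry of $A_1^*$ about $a^*$ is transparent, since the two adjoined endpoints $-d$ and $(k+1)m-d$ are reflections of one another through $a^*$. The substantive checks are $2m \notin A_1^*+A_1^*$ and the two containments $m \pm A_1^* \subseteq A_1^* \pm A_1^*$, and I would prove each by splitting $A_1^*$ into its three blocks $B$, $L_1$, and $a^*-B$ and locating the required sums or differences inside the ladder $L_1$ (using that consecutive ladder elements differ by exactly $m$) and inside $B+B=[0,2m-2]$ via Lemma \ref{lemma1}. This is precisely where the arithmetic branches into the cases $d=m/4$ and $d=3m/4$, and the hypothesis $m\equiv 0 \bmod 4$ is exactly what keeps the intermediate quantities such as $m-2d$ and $2m-2d$ integral and inside the correct intervals.

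The inductive step for odd indices is structurally identical: $A_{2r+1}^*$ differs from $A_{2r-1}^*$ only by the two new symmetric ladder endpoints $-rm-d$ and $(k+r+1)m-d$, so symmetry and the bound $2m\notin A_{2r+1}^*+A_{2r+1}^*$ persist, and the inclusions $m+B\subseteq A^*+A^*$ and $m+(a^*-B)\subseteq A^*+A^*$ carry over verbatim because they never referenced the ladder's length. Only $m+L_{2r+1}$ and $m-L_{2r+1}$ need re-examination at the new extreme endpoints, which I would handle by rewriting each target as a ladder element plus an element of $a^*-B$, reusing the identities $a^*-m/2 = km$ and $a^*-m/2=(k-1)m$ from the base case. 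The even step is instead a direct counting argument: adjoining the single element $p=(k+r+1)m-d$ to the MSTD set $A_{2r-1}$, I would show the new sums number at most $m$ (the blocks $p+B$ and $p+L_{2r-1}$ almost entirely collapse back into $A_{2r-1}+A_{2r-1}$, leaving only $2p$, possibly $p+\max L_{2r-1}$, and at most $m-2$ from $p+(a^*-B)$), while the new differences number at least $2m-2$ (the blocks $p-B$ and $B-p$ lie entirely outside the diameter-bounded range of $A_{2r-1}-A_{2r-1}$). Together with the invariant $|A_{2r-1}+A_{2r-1}|=|A_{2r-1}-A_{2r-1}|+1$ and $m\ge 4$, this forces $|A_{2r}+A_{2r}|\le|A_{2r}-A_{2r}|-m+3<|A_{2r}-A_{2r}|$, establishing item (4) and the MDTS property; the chain containments $A_{2r-1}\subset A_{2r}\subset A_{2r+1}$ are then immediate from the definitions.

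I expect the main obstacle to be the sum-side bookkeeping in the even step, specifically showing $p+(a^*-B)\subseteq A_{2r-1}+A_{2r-1}$ up to a controlled deficit. For each $b\in B$ one must exhibit a ladder element $xm-d$ and a complementary $b_1\in B$ with $(xm-d)+(a^*-b_1)=p+b$, which reduces to solving $b_1=(x-r-\tfrac{1}{2})m-b\in B$ (with the analogous shift $b_1=(x-r-\tfrac{3}{2})m-b$ when $d=3m/4$) by taking $x\in\{r+1,r+2\}$ or $\{r+2,r+3\}$ according to whether $b\le m/2$, so that $b_1$ is $m/2-b$ or $3m/2-b$. The delicate points are verifying that these choices keep $b_1$ inside $[0,m-1]\setminus\{d\}$ for every $b$, and that exactly one ladder element in the relevant window already lies in $a^*-B$, so the count genuinely drops from $m-1$ to $m-2$; the symmetry of $A^*$ about $a^*$ and the divisibility $m\equiv 0\bmod 4$ (which makes $2d\not\equiv 0\bmod m$) are the two facts that make all of these coincidences close cleanly.
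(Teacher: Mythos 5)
Your proposal is correct and follows essentially the same route as the paper: induction on $r$ carrying the invariants of symmetry of $A_{2r-1}^*$ about $a^*$, $2m\notin A_{2r-1}^*+A_{2r-1}^*$, $m\pm A_{2r-1}^*\subseteq A_{2r-1}^*\pm A_{2r-1}^*$, and $|A_{2r-1}+A_{2r-1}|=|A_{2r-1}-A_{2r-1}|+1$, with the same block decomposition into $B$, $L$, and $a^*-B$, the same identities $a^*-m/2=km$ (resp.\ $(k-1)m$), and the same even-step count of at most $m$ new sums versus at least $2m-2$ new differences. The only slip is cosmetic: in your final paragraph the equation $(xm-d)+(a^*-b_1)=p+b$ handles the block $p+B$, while the ``drop from $m-1$ to $m-2$'' argument is the separate treatment of $p+(a^*-B)$; you describe both correctly but label them as one.
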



For an example, we apply Non-Filling in Method 2 to $A = \{0,2,3,4,7,11,12,14\}$, the Conway set, and we get 
\begin{align}
    \notag A_1 \ &= \ \{-1,0,2,3,4,7,11,12,14,15\} \\
    \notag A_2 \ &= \ A_1 \ \cup \ \{19\} \\
    A_3 \ &= \ A_2 \ \cup \ \{-5\},
\end{align}
and so on. Table \ref{NFIM3Tabler} summarizes the growth characteristics of this sequence.

\begin{table}[h] 
\centering
\caption{Non-Filling in Method 2 Example Sequence} 
\label{NFIM3Tabler} 
\begin{tabular}{|c|c|c|c|c|c|c|c|}
\hline
Set & $\big|A_i+A_i\big|$ & $\big|A_i-A_i\big|$ & Cardinality & Diameter & $\big|A_i\big|/\big|A_{i-1}\big|$ & D($A_i$)/D($A_{i-1}$)/& Density \\
\hline
$A_1$ & 32 & 31 & 10 & 16 & N/A & N/A & 0.625 \\
$A_2$ & 36 & 37 & 11 & 20 & 1.100 & 1.250 & 0.550 \\
$A_3$ & 40 & 39 & 12 & 24 & 1.091 & 1.200 & 0.500 \\
$A_4$ & 44 & 45 & 13 & 28 & 1.083 & 1.167 & 0.464 \\
$A_5$ & 48 & 47 & 14 & 32 & 1.077 & 1.143 & 0.437 \\
$A_6$ & 52 & 53 & 15 & 36 & 1.071 & 1.125 & 0.416 \\
$A_7$ & 56 & 55 & 16 & 40 & 1.067 & 1.111 & 0.400 \\
\vdots & \vdots &\vdots &\vdots & \vdots & \vdots & \vdots & \vdots \\
\hline
\end{tabular}\\
\raggedleft Limiting MSTD density: $0.250$\hspace{0.8cm}
\end{table}

As can be seen, Non-Filling in Method 2 gives a linear cardinality growth rate of 1 and a linear diameter growth rate of $m = 4$ between consecutive sets in the chain.

\begin{rem}
The reason we start the chain with $A_1$ instead of $A$ is that if if $d = m/4$, then $A \cup \{(k+1)m-d\}$ is not guaranteed to be MDTS. For instance, if $A$ is the Conway set, then $A \cup \{15\}$ is actually sum-difference balanced. Additionally, since $(k+1)m-d > \max(A)$ and $-d < \min(A)$, the `no filling in' constraint is met.

\end{rem}

\subsection{Non-Filling in Method 3} \label{NFI3Section}

Building off of Non-Filling in Method 2, we now construct a more specific method that has a smaller diameter growth rate between consecutive MSTD sets. For $k \ge 0$, let
\begin{align}
    \notag A_{4k+1}\ &=\ \{-5k-24,-5k-23,-5k-19,-5k-18,\dots,-14,-13\}\\
    \notag&\quad\cup\ \{21,22,26,27,\dots,5k+31,5k+32\}\\
    \notag&\quad\cup\ \{-8,-7,-4,-3,0,5,8,11,12,15,16\}\\
    &=\ \{-7,0,5,8,15\}\cup(5\cdot[-k-5,k+6]+\{1,2\})\setminus\{-9,1,2,6,7,17\}.
\end{align}

We show $A_{4k+1}$ is MSTD. Using the shorthand
\begin{equation} \label{Bn}
    B_n\ = \ A_n\setminus\{5\},
\end{equation}
we see that $B_{4k+1}$ is symmetric with respect to $8$, so
\begin{equation*}
    \big|B_{4k+1}+B_{4k+1}\big| \ = \ \big|B_{4k+1}-B_{4k+1}\big|.
\end{equation*}
Note that $B_{4k+1}+B_{4k+1}$ does not include $10$, as $B_{4k+1}$ does not contain any of $\{-5,2,10,17\}$, and the remaining elements of $B_{4k+1}$ are congruent to $1$ or $2$ modulo $5$. Since $5\in A_{4k+1}$, $A_{4k+1}+A_{4k+1}$ does include $10$. On the other hand, if we let $l\in[-k-5,k+5]$, $\delta\in\{1,2\}$ we form
\begin{align} \label{3.1sums}
    \notag(5l+\delta)+5 \ &= \ (5(l+1)+\delta)+0\\
    \notag(5(k+6)+\delta)+5 \ &= \ (5(k+4)+\delta)+15\\
    \notag-7+5 \ &= \ 11+(-13)\\
    \notag0+5 \ &= \ 8+(-3)\\
    \notag8+5 \ &= \ 16+(-3)\\
    15+5 \ &= \ 8+12,
\end{align} 
so $\big|A_{4k+1}+A_{4k+1}\big| = \big|B_{4k+1}+B_{4k+1}\big|+1$.

To see that $\big|A_{4k+1}-A_{4k+1}\big| = \big|B_{4k+1}-B_{4k+1}\big|$, we let $l\in[-k-4,k+6]$, $\delta\in\{1,2\}$ and form
\begin{align} \label{3.1diffs}
    \notag(5l+\delta)-5 \ &= \ (5(l-1)+\delta)+0\\
    \notag(5(-k-5)+\delta)-5 \ &= \ (5(-k-3)+\delta)-15\\
    \notag-7-5 \ &= \ 0-12\\
    \notag0-5 \ &= \ 11-16\\
    \notag8-5 \ &= \ 11-8\\
    15-5 \ &= \ 22-12.
\end{align}
Thus
\begin{equation}
    \big|A_{4k+1}+A_{4k+1}\big| \ = \ \big|A_{4k+1}-A_{4k+1}\big|+1. \label{A_4k+1}
\end{equation}

Now, let
\begin{align}
    \notag A_{4k+3} \ &= \ \{-5k-28,-5k-24,-5k-19,-5k-18,\dots,-14,-13\}\\
    \notag &\quad\cup \ \{21,22,26,27,\dots,5k+32,5k+36\}\\
    \notag&\quad\cup\ \{-8,-7,-4,-3,0,5,8,11,12,15,16\}\\
    \notag&=\ \{-5k-28,-7,0,5,8,15,5k+36\}\\
    \notag&\quad\cup\ (5*[-k-5,k+6]+\{1,2\})\setminus\{-9,6,7,17\}\\
    &=\ A_{4k+1}\cup\{-5k-28,5k+36\}.
\end{align}
As $B_{4k+3}$ is symmetric with respect to $8$, we have
\begin{equation*}
    \big|B_{4k+3}+B_{4k+3}\big| \ = \ \big|B_{4k+3}-B_{4k+3}\big|.
\end{equation*}
Note $B_{4k+3}+B_{4k+3}$ does not contain $10$ as $B_{4k+3}=B_{4k+1}\cup\{-5k-28,5k+36\}$ and $5k+33, -5k-31 \notin B_{4k+3}$. However, $A_{4k+3}+A_{4k+3}$ contains 10. Using the sums in \eqref{3.1sums} and
\begin{align*}
    (-5k-28)+5 \ &= \ (-5k-23)+0\\
    (5k+36)+5 \ &= \ (5k+26)+15,
\end{align*}
we see that $\big|A_{4k+3}+A_{4k+3}\big| = \big|B_{4k+3}+B_{4k+3}\big|+1$.

To show that $\big|A_{4k+3}-A_{4k+3}\big| = \big|B_{4k+3}-B_{4k+3}\big|$, we use the differences in \eqref{3.1diffs} and
\begin{align*}
    (-5k-28)-5 \ &= \ (-5k-18)-15\\
    (5k+36)-5 \ &= \ (5k+31)-0.
\end{align*}
Thus
\begin{equation}
    \big|A_{4k+3}+A_{4k+3}\big| \ = \ \big|A_{4k+3}-A_{4k+3}\big|+1. \label{A_4k+3}
\end{equation}

Let
\begin{equation}
    A_{4k+2} \ = \ A_{4k+1}\cup\{5k+36\}.
\end{equation}
For $l\in[-k-5,k+5]$, $\delta\in\{1,2\}$,
\begin{align}
    \notag(5l+\delta)+(5k+36)\ &=\ (5(l+1)+\delta)+(5k+31)\\
    \notag-7+(5k+36)\ &=\ (5k+32)+(-3)\\
    \notag0+(5k+36)\ &=\ (5k+31)+5\\
    \notag5+(5k+36)\ &=\ (5k+26)+15\\
    8+(5k+36)\ &=\ (5k+32)+12.
\end{align}
Since $\max(A_{4k+1}+A_{4k+1}) = 10k+64$ and $15$ is the largest element of $A_{4k+1}$ congruent to $0$ mod $5$, the new sums are
\begin{align}
    \notag(5k+31)+(5k+36)\ &=\ 10k+67\\
    \notag(5k+32)+(5k+36)\ &=\ 10k+68\\
    \notag 2(5k+36)\ &=\ 10k+72\\
    15+(5k+36)\ &=\ 5k+51.
\end{align}
Thus, there are $4$ new sums.

For $l\in[-k-4,k+6]$, $\delta\in\{1,2\}$,
\begin{align}
    \notag(5k+36)-(5l+\delta)\ &=\ (5k+31)-(5(l-1)+\delta)\\
    \notag(5k+36)-0\ &=\ (5k+32)-(-4)\\
    \notag(5k+36)-5\ &=\ (5k+31)-0\\
    \notag(5k+36)-8\ &=\ (5k+21)-(-7)\\
    (5k+36)-15\ &=\ (5k+21)-0.
\end{align}
Since $\max(A_{4k+1}-A_{4k+1}) = 10k+56$ and $-7$ is the smallest element of $A_{4k+1}$ congruent to $3$ mod $5$, the new positive differences are
\begin{align}
    \notag(5k+36)-(-5k-23)\ &=\ 10k+59\\
    \notag(5k+36)-(-5k-24)\ &=\ 10k+60\\
    (5k+36)-(-7)\ &=\ 5k+43.
\end{align}
Thus, there are $6$ new differences. Combining the amount of new sums and differences with \eqref{A_4k+1}, we get
\begin{align}
    \big|A_{4k+2}-A_{4k+2}\big| \ = \ \big|A_{4k+2}+A_{4k+2}\big|+1.
\end{align}

Let
\begin{equation}
    A_{4k+4} \ = \ A_{4k+3}\cup\{5k+37\}.
\end{equation}
For $l\in[-k-5,k+5]$, $\delta\in\{1,2\}$,
\begin{align}
    \notag(5l+\delta)+(5k+37)\ &=\ (5(l+1)+\delta)+(5k+32)\\
    \notag-7+(5k+37)\ &=\ (5k+22)+8\\
    \notag0+(5k+37)\ &=\ (5k+32)+5\\
    \notag5+(5k+37)\ &=\ (5k+27)+15\\
    15+(5k+37)\ &=\ (5k+36)+16.
\end{align}
Since $\max(A_{4k+3}+A_{4k+3}) = 10k+72$, $(5k+33) \notin A_{4k+3}$, and $8$ is the largest element of $A_{4k+1}$ congruent to $3$ mod $5$, the new sums are
\begin{align*}
    (5k+32)+(5k+37)\ &=\ 10k+69\\
    (5k+36)+(5k+37)\ &=\ 10k+73\\
    2(5k+37)\ &=\ 10k+74\\
    8+(5k+37)\ &=\ 5k+45.
\end{align*}
Thus there are $4$ new sums. Next, for $l\in[-k-4,k+6]$, $\delta\in\{1,2\}$,
\begin{align}
    \notag(5k+37)-(5l+\delta)\ &=\ (5k+32)-(5(l-1)+\delta)\\
    \notag(5k+37)-(-7)\ &=\ (5k+36)-(-8)\\
    \notag(5k+37)-5\ &=\ (5k+32)-0\\
    \notag(5k+37)-8\ &=\ (5k+22)-(-7)\\
    (5k+37)-15\ &=\ (5k+22)-0.
\end{align}
Since $\max(A_{4k+3}-A_{4k+3}) = 10k+64$, $(-5k-25) \notin A_{4k+3}$, and $0$ is the smallest element of $A_{4k+3}$ congruent to $0$ mod $5$, the new positive differences are
\begin{align*}
    (5k+37)-(-5k-24)\ &=\ 10k+61\\
    (5k+37)-(-5k-28)\ &=\ 10k+65\\
    (5k+37)-0\ &=\ 5k+37.
\end{align*}
Thus there are $6$ new differences. Combining the amount of new sums and differences with \eqref{A_4k+3}, we get
\begin{align}
    \notag&\big|A_{4k+4}-A_{4k+4}\big| \ = \ \big|A_{4k+4}+A_{4k+4}\big| + 1.
\end{align}

Furthermore, we have 
\begin{equation}
    A_{4k+1}\subset A_{4k+2}\subset A_{4k+3}\subset A_{4k+4}\subset A_{4k+5},
\end{equation}
so the sequence $A_{1}\subset A_{2}\subset\cdots$ alternates between being MSTD and MDTS. The growth characteristics of this sequence are given in Table \ref{NFM3.1Table}. 

\begin{table}[h] 
\centering
\caption{Non-Filling in Method 3 Sequence}
\label{NFM3.1Table}
\begin{tabular}{|c|c|c|c|c|c|c|c|}
\hline
Set & $\big|A_i+A_i\big|$ & $\big|A_i-A_i\big|$ & Cardinality & Diameter & $\big|A_i\big|/\big|A_{i-1}\big|$ & D($A_i$)/D($A_{i-1}$)/& Density \\
\hline
$A_1$ & 98 & 97 & 23 & 56 & N/A & N/A & 0.410\\
$A_2$ & 102 & 103 & 24 & 60 & 1.043 & 1.071 & 0.400\\
$A_3$ & 106 & 105 & 25 & 64 & 1.042 & 1.067 & 0.391\\
$A_4$ & 110 & 111 & 26 & 65 & 1.040 & 1.016 & 0.400\\
$A_5$ & 114 & 113 & 27 & 66 & 1.038 & 1.015 & 0.409\\
$A_6$ & 118 & 119 & 28 & 70 & 1.037 & 1.061 & 0.400\\
$A_7$ & 122 & 121 & 29 & 74 & 1.036 & 1.057 & 0.392\\
$A_8$ & 126 & 127 & 30 & 75 & 1.034 & 1.014 & 0.400\\
$A_9$ & 130 & 129 & 31 & 76 & 1.033 & 1.013 & 0.408\\
\vdots & \vdots &\vdots &\vdots & \vdots & \vdots & \vdots & \vdots \\
\hline
\end{tabular}\\
\raggedleft Limiting MSTD density: $0.400$\hspace{0.8cm}
\end{table}

As seen in the table, each set in the sequence has one more element than the previous. The diameter alternates between growing by 2 and 8 between consecutive MSTD sets in this sequence, which gives an `average' diameter growth rate of $5$. This is the best diameter growth rate yet, at the expense of a rather large initial set.

\begin{rem}
    Other sequences were found that had even smaller `average' diameter growth rates than 5. For instance, the set $\{0,1,2,4,5,9,12,13,14\}$ can be extended to form a sequence with an `average' diameter growth rate of 4.8. However, the period of the diameter growth rates for this sequence was 5 as opposed to 2, so the more concise proof in Non-Filling in Method 3 was given instead. A similar method using a symmetric set with every integer odd modulo 7, excluding $\{-25,-23,-6,5,22,24\}$ and including $\{-10,7,9\}$, gives an `average' diameter growth rate of 14/3, cardinality growth rate of 2, and period of 3.
\end{rem}

\section{Growth Rates}
We give a table that compares the growth characteristics of the various methods we constructed. The first three rows of the table cover the methods in \cite{AltSetsPt1}, while the last three rows cover the methods presented in this paper. Note that the $\big|A_1\big|$ and $A_1$ Diam. columns give the smallest possible cardinality and diameter for a set $A_1$ which a method can be applied to. Additionally, the Card. Rate and Diam. Rate columns measure the growth rate between consecutive MSTD sets in the sequence generated by the minimal $A_1$ for that method. 

\begin{table}[h]
\centering
\textbf{\caption{Minimal Method Growth Characteristics}} \vspace{0.1cm}
\label{tab:method1} 
\begin{tabular}{|c|c|c|c|c|c|}
\hline

\textbf{Method} & $\mathbf{\big|A_1\big|}$ & $\mathbf{A_1}$ \textbf{Diam.} & \textbf{Card. Rate} & \textbf{Diam. Rate} & \textbf{Type} \\
\hline 

Filling in 1 & 8 & 14 & $>3\cdot\big|A_{2i-1}\big|$ & $>3\cdot$ Diam$(A_{2i-1})$ & Exponential\\
\hline 
Filling in 2 & 11 & 19 & 20 & 20 & Linear\\ 
\hline 

Non-Filling in & 11 & 18 & 4 & 8 & Linear\\
\hline

Method 1 & 8 & 14 & 8 & 17 & Linear\\
\hline
Method 2 & 8 & 14 & 2 & 8 & Linear\\
\hline
Method 3 & 23 & 56 & 2 & 5 (avg) & Linear\\
\hline


\end{tabular}
\end{table}

\section{Future Directions}
We conclude by briefly discussing some avenues for future research which have arisen in the course of this paper. Firstly, one might investigate the properties of the modulus $n$ used in Theorem \ref{NFIMethod1Theorem}, for instance, we have not been able to prove when such an $n$ exists for a given MSTD set. Also, the ideas used in Section \ref{NFI3Section} could be modified, possibly in a similar manner to the remark following \ref{NFI3Section}, to give a slowly growing set for all odd integers. A strategy for even integers could also be found; such a set would have a diameter growth rate of $4$ between consecutive MSTD sets.

One could also investigate how these methods behave in other algebraic settings. One natural direction is to study fixed endpoint constructions, where one generates the desired alternating chains inside a subset with fixed endpoints. For example, given a prime $p$, one may ask how long of an alternating chain of sum- and difference-dominated supersets can be obtained within the finite field $\mathbb{F}_p$. More generally, one might explore how these constructions behave in fields of prime power order, or more broadly, in groups that are not necessarily abelian. In this context especially, methods with slower, controlled growth rates are far more applicable, hence the emphasis on developing efficient constructions.

\newpage
\ \\
\printbibliography
\ \\
\end{document}